\newcommand{\R}{\mathds R}
\DeclareMathOperator{\supp}{supp}
\title[Additional set-theoretic assumptions]{Additional set-theoretic assumptions and twisted sums of Banach spaces}
\author{Claudia Correa}
\thanks{The author is sponsored by FAPESP (Process no.\ 2014/00848-2).}
\address{Departamento de Matem\'atica,\hfill\break\indent Universidade de S\~ao Paulo, Brazil}
\email{claudiac.mat@gmail.com}
\begin{document}

\theoremstyle{plain}\newtheorem{teo}{Theorem}[section]
\theoremstyle{plain}\newtheorem{prop}[teo]{Proposition}
\theoremstyle{plain}\newtheorem{lem}[teo]{Lemma}
\theoremstyle{plain}\newtheorem{cor}[teo]{Corollary}
\theoremstyle{definition}\newtheorem{defin}[teo]{Definition}
\theoremstyle{remark}\newtheorem{rem}[teo]{Remark}
\theoremstyle{plain} \newtheorem{assum}[teo]{Assumption}
\theoremstyle{definition}\newtheorem{example}[teo]{Example}
\theoremstyle{plain}\newtheorem*{conjecture}{Conjecture}
\theoremstyle{plain}\newtheorem{question}{Question}

\maketitle

\begin{abstract}
In this paper, we discuss the role played by additional set-theoretic assumptions in the investigation of the existence of nontrivial twisted sums of $c_0$ and spaces of continuous functions on nonmetrizable compact Hausdorff spaces.
\end{abstract}

\begin{section}{History and Background of the problem}
In these notes we analyse the role played by some additional set-theoretic assumptions in the study of a classical problem in Banach space theory. This problem is related to the existence of nontrivial twisted sums of Banach spaces. We recall some basic definitions and facts.

\begin{defin}
Let $X$ and $Y$ be Banach spaces. A {\em twisted sum} of $Y$ and $X$ is an exact sequence in the category of Banach spaces, i.e., it is an exact sequence of the form:
\[0\rightarrow Y \rightarrow Z\rightarrow X\rightarrow0,\]
where $Z$ is a Banach space and the maps are bounded linear operators. This twisted sum is said to be {\em trivial} if the sequence splits, i.e., if the image of the map $Y\rightarrow Z$ is complemented in $Z$.
\end{defin}
For a nice discussion on exact sequences of Banach spaces, see \cite[Chapter I]{ThreeSpace}.
Given Banach spaces $X$ and $Y$, note that the direct sum of $Y$ and $X$ provides a twisted sum of $Y$ and $X$. More precisely:
\[0\rightarrow Y \overset{i_1}\longrightarrow Y \bigoplus X \overset{\pi_2}\longrightarrow X \rightarrow0\]
is a twisted sum, where the direct sum is endowed with a product norm, the map $i_1$ is the first inclusion and $\pi_2$ is the second projection. It is clear that this twisted sum is trivial. At this point one might wonder about the existence of nontrivial twisted sums of Banach spaces. Unlike the category of vector spaces, in which every twisted sum is trivial, in the category of Banach spaces there are examples of nontrivial twisted sums.
A classical example is provided by an old result of Phillips \cite{Phillips} which states that the space $c_0$ of real sequences converging to zero is not complemented in $\ell_\infty$, the space of bounded real sequences. Therefore, the following twisted sum is nontrivial:
\[0 \rightarrow c_0 \rightarrow \ell_\infty \rightarrow \ell_\infty/c_0 \rightarrow 0,\]
where the arrows are the inclusion and the quotient map. However there are pairs of Banach spaces that admit only trivial twisted sums. Interesting examples of this phenomenon are consequences of the classical Theorem of Sobcyk \cite{Sobczyk}, that states that $c_0$ is complemented in every separable superspace; more precisely, every isomorphic copy of $c_0$ inside a separable Banach space is complemented. Since separability is a three space property \cite{ThreeSpace}, it follows that if $X$ is a separable Banach space, then every twisted sum of $c_0$ and $X$ is trivial. In these notes we are interested on the converse of this last implication. In other words:
If $X$ is a Banach space such that every twisted sum of $c_0$ and $X$ is trivial, then $X$ must be separable? This question is easily answered negatively since there are nonseparable projective Banach spaces; namely, the spaces $\ell_1(\Gamma)$, for any uncountable set $\Gamma$ \cite[Lemma 1.4.a]{ThreeSpace}. However this question becomes interesting when we restrict ourselves to the class of $C(K)$ spaces.
Here $C(K)$ denotes the Banach space of continuous real-valued functions defined on a compact Hausdorff space $K$, endowed with the supremum norm. Recall that $C(K)$ is separable if and only if $K$ is metrizable. Therefore Sobczyk's Theorem implies that if $K$ is a compact metrizable space, then every twisted sum of $c_0$ and $C(K)$ is trivial. In this context, the converse we are discussing can be phrased as the following question.
\begin{question}\label{bigQuestion}
Is there a compact Hausdorff nonmetrizable space $K$ such that every twisted sum of $c_0$ and $C(K)$ is trivial?
\end{question}
This question was first stated in the papers \cite{CastilloKalton, JesusSobczyk}. Until last year, there were few results related to this problem. They were summarized in \cite[Proposition~2]{Castilloscattered}, namely we have the following proposition.

\begin{prop}\label{resumoResultados}
Let $K$ be a compact Hausdorff space. There exists a nontrivial twisted sum of $c_0$ and $C(K)$ under any one of the following assumptions:
\begin{enumerate}
\item $K$ is a nonmetrizable Eberlein compact space;
\item $K$ is a Valdivia compact space which does not satisfy the countable chain condition (ccc);
\item the weight of $K$, denoted by $w(K)$, is equal to $\omega_1$ and the dual space of $C(K)$ is not weak*-separable;
\item $K$ has the extension property (\cite{CTextensor}) and it does not have ccc;
\item $C(K)$ contains an isomorphic copy of $\ell_\infty$; in particular, it is the case if $K$ is infinite and extremally disconnected.
\end{enumerate}
\end{prop}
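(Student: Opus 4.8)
The plan is to treat the proposition as a compilation and, in each case, to exhibit an explicit nontrivial twisted sum, reducing whenever possible to two \emph{core} nontrivial examples: the fact that $\mathrm{Ext}(c_0(\omega_1),c_0)\neq 0$, i.e.\ that there is a nontrivial twisted sum of $c_0$ and $c_0(\Gamma)$ for every uncountable $\Gamma$, and the fact that $\mathrm{Ext}(\ell_\infty,c_0)\neq 0$; for both I would appeal to the constructions in \cite{CastilloKalton, Castilloscattered}. The transfer device I would use is the elementary observation that $\mathrm{Ext}(\,\cdot\,,c_0)$ is monotone under complementation: if $Y$ is complemented in $X$ and there is a nontrivial twisted sum of $c_0$ and $Y$, then there is one of $c_0$ and $X$. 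Indeed, writing $X=Y\oplus W$ and starting from a nontrivial sequence $0\to c_0\to Z\to Y\to 0$, the sequence $0\to c_0\to Z\oplus W\to Y\oplus W\to 0$ is a twisted sum of $c_0$ and $X$ whose splitting would, by restriction to $Z$, split the original one. So much of the problem is reduced to locating a suitable complemented subspace of $C(K)$ carrying one of the core examples.

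For items (1), (2) and (4) I would argue through a complemented copy of $c_0(\Gamma)$ with $\Gamma$ uncountable. The first step, common to all three, is that the hypothesis forces $K$ to fail ccc: this is part of the assumption in (2) and (4), and for (1) it is the classical fact that a ccc Eberlein compact is metrizable. Failure of ccc yields an uncountable family $\{U_\alpha\}_{\alpha<\omega_1}$ of pairwise disjoint nonempty open sets; choosing norm-one bump functions $f_\alpha$ with $\supp f_\alpha\subseteq U_\alpha$ produces an isometric copy of $c_0(\omega_1)$ inside $C(K)$ by disjointness of supports. The decisive second step is to \emph{complement} this copy, and this is exactly where the specific structural hypothesis enters: the Eberlein embedding of $K$ as a weakly compact subset of some $c_0(\Gamma)$ lets one choose the bumps so that the point evaluations land in $c_0$ and define a bounded projection, while the retractional skeleton of a Valdivia compact and the simultaneous extension operator granted by the extension property of \cite{CTextensor} play the analogous role in (2) and (4). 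With a complemented $c_0(\omega_1)$ in hand, the transfer device and the core example $\mathrm{Ext}(c_0(\omega_1),c_0)\neq 0$ finish these cases.

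Item (5) is the most direct: a copy of $\ell_\infty$ in any Banach space is automatically complemented, since $\ell_\infty$ is $1$-injective, so $\ell_\infty$ is complemented in $C(K)$; the transfer device then reduces (5) to the core example $\mathrm{Ext}(\ell_\infty,c_0)\neq 0$. The special case of $K$ infinite and extremally disconnected follows because then $C(K)$ contains an isometric copy of $\ell_\infty$ (for instance $C(\beta\mathds N)=\ell_\infty$, and any infinite extremally disconnected $K$ produces such a copy). Item (3) is of a different nature, and I would handle it by a direct construction rather than by transfer: for $C(K)$ spaces the failure of weak*-separability of $C(K)^*=M(K)$ is a non-separability condition on $K$, and combined with $w(K)=\omega_1$ it provides an $\omega_1$-indexed system of measures with enough independence to write down a nontrivial quasilinear map $C(K)\to c_0$.

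The main obstacle, in each structural case (1), (2), (4), is the second step---producing a \emph{bounded} projection onto the isometric $c_0(\omega_1)$---since the bare presence of disjointly supported bumps gives only an uncomplemented copy in general; it is precisely the Eberlein, Valdivia and extension-property hypotheses that are engineered to supply such a projection, and verifying its boundedness is the technical heart. For (3) the analogous difficulty is internal, namely controlling the norm of the quasilinear map and proving its nontriviality, and for the two core examples the nontriviality of the twisted sums of $c_0$ with $c_0(\omega_1)$ and with $\ell_\infty$ is itself the substantial input.
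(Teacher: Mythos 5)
You should first be aware that the paper contains no proof of this proposition: it is quoted verbatim as \cite[Proposition~2]{Castilloscattered}, itself a compilation whose items are established in \cite{CastilloKalton} and \cite{CTextensor}. So there is no internal argument to compare yours against; the only meaningful comparison is with the original sources. Your overall architecture does match them: the transfer lemma (a complemented subspace $Y$ of $X$ with $\mathrm{Ext}(Y,c_0)\ne0$ forces $\mathrm{Ext}(X,c_0)\ne0$) is correct as you prove it, and items (1), (2), (4), (5) are indeed handled in the literature by locating a complemented copy of $c_0(\omega_1)$ (respectively of $\ell_\infty$) and invoking the nonvanishing of $\mathrm{Ext}(c_0(\omega_1),c_0)$ and $\mathrm{Ext}(\ell_\infty,c_0)$. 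Item (3) is likewise proved in \cite{CastilloKalton} by a direct construction from an uncountable biorthogonal system extracted from the failure of weak*-separability, which is the flavor you describe.

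That said, as a proof your text has concrete gaps beyond the two ``core examples'' you explicitly outsource. First, in the complementation step for (1), (2), (4), the projection you gesture at --- ``point evaluations'' at points $x_\alpha\in U_\alpha$ paired with the bumps $f_\alpha$ --- cannot be bounded into the span of the $f_\alpha$'s as stated: testing on the constant function $1$ gives the family $(1)_{\alpha<\omega_1}$, which is not in $c_0(\omega_1)$. The functionals must annihilate constants (e.g.\ differences of point evaluations, or more generally measures $\mu_\alpha$ supported near $U_\alpha$) and, crucially, must satisfy $(\mu_\alpha(f))_\alpha\in c_0(\omega_1)$ for \emph{every} $f\in C(K)$; producing such a family is precisely where the Eberlein embedding into $(c_0(\Gamma),\text{pointwise})$, the $\Sigma$-subset of a Valdivia compactum, or the extension operators of \cite{CTextensor} are actually used, and none of that is carried out. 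You acknowledge this is ``the technical heart,'' but that is an admission that the proof is not given, not a proof. Second, for item (3) the phrase ``enough independence to write down a nontrivial quasilinear map'' names the desired conclusion rather than an argument; the actual proof requires building an uncountable biorthogonal system of a specific kind from non-weak*-separability of $C(K)^*$ together with $w(K)=\omega_1$, and then a Johnson--Lindenstrauss-type construction whose nontriviality must be verified. In short: the reduction skeleton is sound and faithful to how these results are proved, but every substantive input --- the two core nonvanishing facts, the boundedness of the projections, and the construction in (3) --- is asserted rather than established, so the proposal functions as a correct reading guide to \cite{CastilloKalton} and \cite{CTextensor} rather than as a proof.
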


In a recent series of papers \cite{Castilloscattered, ExtCKc0}, this problem was extensively studied and great progress was achieved towards its solution. In these works the importance of additional set-theoretic assumptions in the understanding of Question \ref{bigQuestion} became clear. Finally, in \cite{Plebanek} the first consistent examples of compact Hausdorff nonmetrizable spaces $K$ such that $c_0$ can not be nontrivially twisted with $C(K)$ were given. As we will discuss in Section~\ref{sec:Resultados}, thanks to the results of \cite{Castilloscattered} and \cite{ExtCKc0} the existence of nontrivial twisted sums of $c_0$ and $C(K)$, for some of the spaces $K$ given by Marciszewski and Plebanek in \cite{Plebanek}, are independent of the axioms of ZFC. The additional set-theoretic assumptions used in those works are the Continuum hypothesis and Martin's axiom.
We observe that the existence, in ZFC, of a compact Hausdorff nonmetrizable space $K$ such that $c_0$ can not be nontrivially twisted with $C(K)$ is still an open problem. In Section~\ref{sec:Resultados}, we discuss the results obtained in \cite{Castilloscattered}, \cite{ExtCKc0} and \cite{Plebanek}, and we give more details about those obtained by myself and D. V. Tausk in \cite{ExtCKc0}. In the final section, we describe the ongoing investigation of some open problems related to Question \ref{bigQuestion} and the progress achieved towards their solutions in \cite{ValdiviaTree}, where we worked assuming the Diamond axiom. In Section \ref{sec:axioms}, we present a brief introduction to the additional set-theoretic assumptions used in the works mentioned above.

\end{section}

\begin{section}{Continuum hypothesis, Diamond and Martin's axioms}
\label{sec:axioms}

The Continuum hypothesis (CH) was born in the early history of modern set theory; actually CH stimulated the development of this theory. It all began with G. Cantor's investigation of different kinds of infinities \cite{Cantor}. Cantor showed that there is no one-to-one correspondence between the set of natural numbers and the set of real numbers. In other words, using the idea that the existence of a one-to-one correspondence between two sets expresses the notion of having the same number of elements, Cantor showed that there are strictly more real than natural numbers. In this context, he started investigating the existence of a set with (strictly) more elements than the natural numbers and (strictly) less elements than the real numbers. This investigation led to the modern formulation of CH. We denote by $\omega$ the first infinite ordinal, by $\omega_1$ the first uncountable ordinal and by $\mathfrak{c}$ the cardinality of the real numbers, also known as the cardinality of the continuum. The Continuum hypothesis is the following statement: there is no cardinal number strictly between $\omega$ and $\mathfrak c$, i.e., $\mathfrak c=\omega_1$. Cantor was never able to prove or disprove CH. It was only with the works of K. G\"odel \cite{Godel} and P. Cohen \cite{Cohen} that we were able to understand the status of CH regarding the axiomatic set-theory we use to do modern mathematics. Those mathematicians showed that CH is independent of ZFC, i.e., both CH and its negation are consistent with ZFC. Therefore, Cantor would never be able to prove or disprove CH, using the axioms of ZFC.

Now we turn our attentions to a classical problem in set theory that motivated the next axioms we want to discuss here: Diamond axiom and Martin's axiom. In his paper \cite{Cantor}, Cantor characterized the canonical order of the real numbers; namely, he proved that every nonempty totally ordered set with no endpoints that is connected and separable, when endowed with the order topology, is order-isomorphic to the real line. The {\em Souslin problem} asks if we can replace separable with ccc in the above statement. More precisely, the Souslin problem is the following question: If $X$ is a nonempty totally ordered set without endpoints which is connected and satisfies ccc, when endowed with the order topology, then $X$ is order-isomorphic to the real line? The {\em Souslin hypothesis} (SH) is the statement that the Souslin problem has positive answer. The first appearance of the Souslin problem was in an early volume of Fundamenta Mathematicae in 1920 as part of a list of open problems and it was attributed to  Mikhail Souslin. The first great breakthrough on the investigation of SH happened when S. Tennenbaum \cite{Tennenbaum} showed the relative consistency of the negation of SH with ZFC. Later, R. Jensen \cite{Jensen} gave another proof of the consistency of the negation of SH. Jensen isolated an interesting combinatorial principle (consistent with ZFC) that implies the negation of SH, this principle is called the {\em Diamond Axiom} ($\diamondsuit$). Finally, in 1971, R. Solovay and Tennenbaum established the consistency of SH \cite{Solovay}. Martin's axiom (MA) was then isolated by T. Martin from this work of Solovay and Tennenbaum.

In what follows we discuss briefly the statements of $\diamondsuit$ and of MA.
We start with $\diamondsuit$. In order to understand this combinatorial principle, we need to recall some basic concepts. This axiom is related to, in some sense, big subsets of $\omega_1$. A subset of $\omega_1$ is said to be a {\em club} if it is closed and unbounded in $\omega_1$, where $\omega_1$ is endowed with the canonical order topology; we say that it is {\em stationary} if it intersects every club.

\begin{defin}
Let $I$ be a set.
\begin{enumerate}
\item An {\em $\alpha$-filtration} of $I$, where $\alpha$ is an ordinal number, is an increasing family $(I_\beta)_{\beta \in \alpha}$ of subsets of $I$ such that $I=\bigcup_{\beta \in \alpha}I_\alpha$. This filtration of $I$ is said to be {\em continuous} if, for every limit ordinal $\beta \in \alpha$, we have $I_\beta=\bigcup_{\lambda \in \beta}I_\lambda$.
\item Given an $\omega_1$-filtration of $I$, a {\em diamond family} for this filtration is a family $(I_{\alpha}^\diamondsuit)_{\alpha \in \omega_1}$ with each $I_\alpha^\diamondsuit$ a subset of $I_\alpha$ and such that given any subset $J$ of $I$, the set
\[\{\alpha \in \omega_1: J \cap I_\alpha=I_\alpha^\diamondsuit\}\]
is stationary.
\end{enumerate}
\end{defin}
The axiom $\diamondsuit$ is the statement that there exists a diamond family for every continuous $\omega_1$-filtration $(I_\alpha)_{\alpha \in \omega_1}$ of a set $I$ such that each $I_\alpha$ is countable. It is easy to see that $\diamondsuit$ is equivalent to the statement that there exists a diamond family for the canonical $\omega_1$-filtration of the set $\omega_1$, i.e., with $I_\alpha=\alpha$, for every $\alpha \in \omega_1$.
Moreover, since there exist $\omega_1$ disjoint stationary subsets of $\omega_1$ \cite[Chapter II, Corollary 6.12]{Kunen},
we have that the version of $\diamondsuit$ which requires the existence of a diamond family only when the set I is countable is equivalent to CH.

We finish this section by discussing a bit about MA. In order to state Martin's axiom, we need to introduce some concepts regarding partial orders. Let $(\mathbb P, \le)$ be a partial order. A subset $D$ of $\mathbb P$ is called {\em dense} in $\mathbb P$ if given $p \in \mathbb P$ there exists $d \in D$ with $d \le p$. Two elements $p$ and $q$ of $\mathbb P$ are said {\em compatible} if there exists $r \in \mathbb P$ with $r \le p$ and $r \le q$. If $p$ and $q$ are not compatible, we say that they are {\em incompatible}. An {\em antichain} of $\mathbb P$ is a subset of $\mathbb P$ with the property that any two distinct elements are incompatible; a partial order $(\mathbb P,\le)$ is said to satisfy the {\em countable chain condition} (ccc) if every antichain of $\mathbb P$ is countable. Since MA assures the existence of some filters on partial orders satisfying ccc, lets define those important objects.

\begin{defin}
A nonempty subset $G$ of $\mathbb P$ is said to be a {\em filter} if it satisfies the following properties:
\begin{itemize}
\item if $p$ and $q$ belong to $G$, then there exists $r \in G$ with $r \le p$ and $r \le q$, i.e., each pair of elements of $G$ are compatible and this fact is testified by some element of $G$;
\item if $p \in G$ and $q \in \mathbb P$ satisfies $p \le q$, then $q \in G$.
\end{itemize}
\end{defin}

Now we can state MA. For each infinite cardinal $\kappa$, MA($\kappa$) is the following statement: Let $(\mathbb P, \le)$ be a nonempty partial order and $\mathfrak D$ be a family of dense subsets of P. If $\mathbb P$ satisfies ccc and the cardinality of $\mathfrak D$ is at most $\kappa$, then there exists a filter in $\mathbb P$ that intersects every element of $\mathfrak D$. Finally, MA is the statement that MA($\kappa$) holds for $\omega \le \kappa< \mathfrak c$.
Since MA($\omega$) is a theorem of ZFC and MA($\mathfrak c$) is false \cite[Lemma 2.6]{Kunen}, we have that MA is interesting only if we assume the negation of CH.

\end{section}

\begin{section}{Consistency and independency results}
\label{sec:Resultados}

The class of nonmetrizable compact Hausdorff spaces contains dramatically distinct subclasses of spaces: we have classes that are ``close'' to the class of metrizable spaces, in the sense that their elements have many properties implied by metrizability and we have classes that are ``far'' from the class of metrizable spaces, i.e., their elements have little in common with the metrizable ones. For instance, with respect to sequential properties, the class of Corson compacta is close to the metrizable ones. In fact, every Corson compact space is a Frech\'et--Urysohn space \cite[Lemma~1.6 (ii)]{Kalenda}. On the other hand, the extremally disconnected compact spaces are completely different from the metrizable ones: they have no nontrivial convergent sequences \cite[Theorem 18]{Arhangelskii}.
In light of Sobczyk's Theorem, it is reasonable to believe that if there exists a nonmetrizable compact Hausdorff space $K$ such that $c_0$ can not be nontrivially twisted with $C(K)$, then it will belong to classes close to the class of metrizable compact spaces. Having those considerations in mind, D.V. Tausk and myself decided to investigate the twisted sums of $c_0$ and $C(K)$, for $K$ belonging to the class of Corson compact spaces and, more generally, its superclass of Valdivia compacta. To discuss our results, we start by recalling some standard definitions and well-known facts. Given an index set $I$, we write $\Sigma(I)=\big\{x\in\R^I:\text{$\supp x$ is countable}\big\}$, where the support $\supp x$ of $x$ is defined by $\supp x=\big\{i\in I:x_i\ne0\big\}$.

\begin{defin}
Given a compact Hausdorff space $K$, we call $A$ a {\em $\Sigma$-subset\/} of $K$ if there exists an index set $I$ and a continuous injection $\varphi:K\to\R^I$ such that $A=\varphi^{-1}[\Sigma(I)]$. The space $K$ is called a {\em Valdivia compactum\/} if it admits a dense $\Sigma$-subset and it is called a {\em Corson compactum\/} if $K$ is a $\Sigma$-subset of itself.
\end{defin}
It is clear that every compact metric space is Corson and that every Corson space is Valdivia (for an amazing survey on Corson and Valdivia compacta, see \cite{Kalenda}). In \cite{ExtCKc0}, we developed a technique for constructing nontrivial twisted sums of $c_0$ and certain nonseparable Banach spaces, using the existence of interesting biorthogonal systems. Using these techniques, we were able to solve the problem for Corson compacta assuming CH.

\begin{teo}\label{corson}
If $K$ is a Corson compact space with weight greater or equal to $\mathfrak{c}$, then there exists a nontrivial twisted sum of $c_0$ and $C(K)$.
In particular, under CH, there exists a nontrivial twisted sum of $c_0$ and $C(K)$, for every nonmetrizable Corson compact space $K$.
\end{teo}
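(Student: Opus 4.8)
The plan is to reduce the problem to a purely ``internal'' statement about $C(K)$ and then to feed the resulting data into the twisting machinery of \cite{ExtCKc0}. Concretely, I would aim to produce a bounded biorthogonal system $(f_\alpha,\mu_\alpha)_{\alpha<\mathfrak c}$ in $C(K)\times C(K)^*$ with the additional \emph{$\Sigma$-property} that $\{\alpha<\mathfrak c:\mu_\alpha(g)\neq0\}$ is countable for every $g\in C(K)$, and then to twist this system against an almost disjoint family of size $\mathfrak c$ of infinite subsets of $\omega$. The almost disjoint family is the combinatorial engine of the construction: it is exactly the feature that forces the twisting to take values in $c_0$ and that will later obstruct splitting, in the spirit of the Johnson--Lindenstrauss space. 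It is also the reason the hypothesis reads $w(K)\ge\mathfrak c$ rather than merely $w(K)\ge\omega_1$, since one needs $\mathfrak c$ many biorthogonal functionals to match the $\mathfrak c$ members of the family.

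For the first task I would exploit that a Corson compactum is, in particular, such that $C(K)$ is weakly Lindel\"of determined, with density character equal to $w(K)\ge\mathfrak c$ (see \cite{Kalenda}). Such spaces carry a fundamental biorthogonal system whose dual functionals have countable support against every vector; embedding $K$ into some $\Sigma(I)$ and working with coordinate-type functions makes this concrete, the countability of supports being inherited directly from the defining property that $\supp x$ is countable for $x\in\Sigma(I)$. Restricting to a subfamily of cardinality exactly $\mathfrak c$ and normalizing would then yield the desired bounded $\Sigma$-biorthogonal system $(f_\alpha,\mu_\alpha)_{\alpha<\mathfrak c}$.

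With the system in hand, I would fix an almost disjoint family $\{S_\alpha:\alpha<\mathfrak c\}$ of infinite subsets of $\omega$ and build the twisted sum by the method of \cite{ExtCKc0}: the $\Sigma$-property guarantees that, for each $g\in C(K)$, only countably many of the functionals $\mu_\alpha$ see $g$, so the sequence on $\omega$ attached to $g$ is well behaved and the construction genuinely produces an extension of $C(K)$ by $c_0$. The main obstacle is the proof of \emph{nontriviality}: one must show that the distinguished copy of $c_0$ admits no bounded linear projection. I expect to argue by contradiction, showing that a splitting would produce a bounded linear ``section'' compatible with all $\mathfrak c$ biorthogonal relations simultaneously, and then to extract a contradiction from the almost disjointness of $\{S_\alpha\}$, exactly as nontriviality is obtained for the Johnson--Lindenstrauss space. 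This uncountable-versus-almost-disjoint incompatibility is the delicate heart of the argument.

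Finally, the ``in particular'' clause follows formally. A compact Hausdorff space is metrizable precisely when it has countable weight, so any nonmetrizable $K$ satisfies $w(K)\ge\omega_1$; under CH one has $\omega_1=\mathfrak c$, whence $w(K)\ge\mathfrak c$ and the first part of the theorem applies to give a nontrivial twisted sum of $c_0$ and $C(K)$.
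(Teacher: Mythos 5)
Your proposal follows essentially the same route as the paper's proof, which is exactly the argument of \cite{ExtCKc0}: extract from the Corson (equivalently, WLD) structure a bounded biorthogonal system of cardinality $\mathfrak{c}$ whose functionals have countable support on each $g\in C(K)$, twist it against an almost disjoint family of $\mathfrak{c}$ infinite subsets of $\omega$ in Johnson--Lindenstrauss style, and derive nontriviality from the incompatibility of a splitting with the almost disjointness. The deduction of the CH case from $w(K)\ge\mathfrak{c}$ is also the paper's.
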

\begin{proof}
See \cite[Theorem 3.1]{ExtCKc0}.
\end{proof}

It is known that, under MA and the negation of CH, every ccc Corson compactum is metrizable  \cite{ArgyrosMartin}. Having in mind Proposition \ref{resumoResultados} item $(2)$, we have that, under MA and the negation of CH, there exists a nontrivial twisted sum of $c_0$ and $C(K)$, for every nonmetrizable Corson compact space $K$. Therefore, it follows from Theorem \ref{corson} that, under MA, there exists a nontrivial twisted sum of $c_0$ and $C(K)$, for every nonmetrizable Corson compactum $K$. In this context, the following question remains open.

\begin{question}
Does it hold, in ZFC, that there exists a nontrivial twisted sum of $c_0$ and $C(K)$, for every nonmetrizable Corson compact $K$?
\end{question}

The general Valdivia case, under CH, remains open, but many results were obtained in \cite{ExtCKc0}. They are summarized in the next theorems. Recall that given a point $x$ of a topological space $\mathcal X$, we define the {\em weight of $x$ in $\mathcal X$\/} by:
\[w(x,\mathcal X)=\min\big\{w(V):\text{$V$ neighborhood of $x$ in $\mathcal X$}\big\}.\]

\begin{teo}\label{pontoGdelta}
Let $K$ be a Valdivia compact space admitting a $G_\delta$ point $x$ with $w(x,K) \ge \mathfrak{c}$. Then there exists a nontrivial twisted sum of $c_0$ and $C(K)$. In particular, under CH, if $K$ is a Valdivia compact space admitting a $G_\delta$ point with no second countable neighborhoods, then there exists a nontrivial twisted sum of $c_0$ and $C(K)$.
\end{teo}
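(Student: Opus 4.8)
The plan is to produce the twisted sum directly from a biorthogonal system concentrated at $x$, using the machinery of \cite{ExtCKc0} that also yields Theorem~\ref{corson}; the two hypotheses on $x$ are precisely what is needed to run that machinery at length $\mathfrak c$. First I would fix a continuous injection $\varphi\colon K\to\R^I$ with $A=\varphi^{-1}[\Sigma(I)]$ dense, witnessing that $K$ is Valdivia. Since a $G_\delta$ point of a Valdivia compactum lies in its dense $\Sigma$-subset (see \cite{Kalenda}), we have $x\in A$ and $\supp\varphi(x)$ is countable; as $\Sigma(I)$ is a linear subspace, up to a translation of $\R^I$ I may assume $\varphi(x)=0$. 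Writing $\{x\}=\bigcap_n V_n$ with each $V_n$ a basic relatively open neighbourhood of $x$ and letting $D\subseteq I$ be the union of the finite coordinate sets defining the $V_n$, one checks that $D$ is countable and that $\varphi(y)|_D=0$ forces $y=x$ for $y\in K$; thus $x$ is pinned down by the countably many coordinates in $D$.

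Next I would exploit $w(x,K)\ge\mathfrak c$. The idea is that, since $x$ is determined by the countable set $D$ while every neighbourhood of $x$ has weight at least $\mathfrak c$, the coordinates in $I\setminus D$ must realise that weight arbitrarily close to $x$. Concretely, I would select a family $(i_\alpha)_{\alpha<\mathfrak c}$ of coordinates in $I\setminus D$ together with points $p_\alpha\in K$ accumulating at $x$ on which these coordinates are switched on independently, and then, using normality of $K$ and Urysohn's lemma, build functions $f_\alpha\in C(K)$ with $f_\alpha(x)=0$ and discrete signed measures $\mu_\alpha\in C(K)^*$ supported on finite sets clustering at $x$. The aim is that $(f_\alpha,\mu_\alpha)_{\alpha<\mathfrak c}$ be biorthogonal, $\mu_\alpha(f_\beta)=\delta_{\alpha\beta}$, with the $\mu_\alpha$ forming a weak*-null family of the type required in \cite{ExtCKc0}. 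Feeding such a system into the construction of \cite{ExtCKc0} produces a quasi-linear map from $C(K)$ into $c_0$ admitting no bounded linear splitting, equivalently a nontrivial twisted sum $0\to c_0\to Z\to C(K)\to 0$, exactly as in Theorem~\ref{corson}.

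The step I expect to be the main obstacle is this transfinite selection. One must choose the $i_\alpha$, the $p_\alpha$ and the $f_\alpha$ so that biorthogonality holds \emph{simultaneously} with the clustering and almost-disjointness of the supports of the $\mu_\alpha$ that makes the associated quasi-linear map non-trivial; keeping both properties through $\mathfrak c$ steps is delicate, and it is here that the two hypotheses genuinely interact. The $G_\delta$ property localises the obstruction to the countable coordinate set $D$, so that the relevant local structure at $x$ is Corson-like and hence compatible with the $c_0$-valued construction, while $w(x,K)\ge\mathfrak c$ is exactly what guarantees that the selection does not terminate early, i.e.\ that $\mathfrak c$ independent coordinates remain available arbitrarily near $x$. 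Verifying that the resulting system satisfies the precise technical requirements of \cite{ExtCKc0}, rather than being merely an abstract biorthogonal system, is the crux.

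Finally, the ``in particular'' clause is immediate: under CH one has $\mathfrak c=\omega_1$, and a $G_\delta$ point $x$ with no second countable neighbourhoods is exactly one every neighbourhood of which has weight at least $\omega_1=\mathfrak c$, so that $w(x,K)\ge\mathfrak c$ and the first part applies.
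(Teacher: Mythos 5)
Your overall strategy---extract from the hypotheses a bounded biorthogonal system $(f_\alpha,\mu_\alpha)_{\alpha<\mathfrak c}$ in $C(K)\times C(K)^*$, with the $\mu_\alpha$ finitely supported measures clustering at $x$, and feed it into the twisted-sum machinery of \cite{ExtCKc0}---is indeed the route that the paper attributes to \cite{ExtCKc0}, and your preliminary reductions are sound: a $G_\delta$ point of a Valdivia compactum does belong to every dense $\Sigma$-subset (so $x\in A$), and $x$ is pinned down by a countable coordinate set $D$. The deduction of the ``in particular'' clause from the first statement under CH is also correct.

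The decisive step, however, is missing, and you flag it yourself as the crux without resolving it. The passage from ``every neighbourhood of $x$ has weight at least $\mathfrak c$'' to ``there exist $\mathfrak c$ coordinates $i_\alpha\in I\setminus D$ switched on independently at points $p_\alpha$ accumulating at $x$'' is essentially the content of the theorem, and nothing in the sketch makes it precise: ``independently'' is never defined, and large local weight does not by itself yield a long family of coordinates that are pairwise unentangled on $K$ \emph{and} realized arbitrarily close to $x$---a priori the weight of a neighbourhood could be carried by coordinates whose nonzero sets stay away from $x$, or which coincide on $K$ in ways that destroy biorthogonality. What has to be exploited is the interaction between the $G_\delta$ hypothesis (which, through the countable set $D$ and the countable-support/Fr\'echet--Urysohn structure of the dense $\Sigma$-subset $A$, produces sequences in $A$ converging to $x$) and the weight hypothesis (which must be shown to yield $\mathfrak c$-many essentially distinct such approaches), and the resulting system must then be checked against the precise weak$^*$ condition on $(\mu_\alpha)_{\alpha<\mathfrak c}$ demanded by the main theorem of \cite{ExtCKc0}, not merely against abstract biorthogonality. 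As written, the proposal is a correct statement of the plan together with an acknowledgement that its central construction is open, so it cannot be accepted as a proof.
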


\begin{teo}\label{convergentefoarSigma}
Assume CH. Let $K$ be a Valdivia compact space admitting a dense $\Sigma$-subset $A$ such that some point of $K\setminus A$ is the limit of a nontrivial sequence in $K$. Then there exists a nontrivial twisted sum of $c_0$ and $C(K)$.
\end{teo}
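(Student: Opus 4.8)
The plan is to realise the desired twisted sum as a \emph{pullback} of the classical nontrivial sequence of Phillips \cite{Phillips},
\[0\rightarrow c_0\rightarrow\ell_\infty\rightarrow\ell_\infty/c_0\rightarrow0,\]
along a suitable operator $T\colon C(K)\to\ell_\infty/c_0$. For any such $T$, the pullback yields a twisted sum $0\rightarrow c_0\rightarrow Z\rightarrow C(K)\rightarrow0$ which is trivial if and only if $T$ admits a bounded lifting $\widetilde T\colon C(K)\to\ell_\infty$ with $q\circ\widetilde T=T$, where $q\colon\ell_\infty\to\ell_\infty/c_0$ is the quotient map. Thus everything reduces to producing one bounded operator $T$ that cannot be lifted. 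This is exactly the kind of object delivered by the biorthogonal-system machinery developed in \cite{ExtCKc0}; the construction I have in mind runs parallel to the proof of Theorem~\ref{pontoGdelta}, with the r\^ole of the $G_\delta$ point of large weight now played by the point $p$, its uncountable support supplying the ``width'' and the convergent sequence supplying the ``depth'' needed to twist in a copy of $c_0$.

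To extract the combinatorial data, I would fix a continuous injection $\varphi\colon K\to\R^I$ with $A=\varphi^{-1}[\Sigma(I)]$, and let $p\in K\setminus A$ be the limit of a nontrivial sequence $(y_n)_{n\in\omega}$ in $K$. Since $p\notin A$, the support $\supp\varphi(p)$ is uncountable, so one may choose distinct coordinates $(i_\alpha)_{\alpha\in\omega_1}$ in $\supp\varphi(p)$; the associated coordinate functions $h_\alpha=\pi_{i_\alpha}\circ\varphi\in C(K)$ satisfy $h_\alpha(p)\ne0$ and, because $y_n\to p$, also $h_\alpha(y_n)\to h_\alpha(p)$ for every $\alpha$. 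Here CH enters first, matching $\omega_1=\mathfrak c$ so that the uncountable support is indexed precisely by $\omega_1$. I would then fix an almost disjoint family $(M_\alpha)_{\alpha\in\omega_1}$ of infinite subsets of $\omega$ and define $T$ by spreading, for each $f\in C(K)$, the oscillation of $f$ along the sequence in the direction $h_\alpha$ over the block $M_\alpha\subseteq\omega$, landing in $\ell_\infty/c_0$. The density of $A$ together with the genuine convergence $y_n\to p$ guarantees that each coordinate contributes a sequence that converges, so that the spread is bounded once passed to the quotient; the almost disjointness of the $M_\alpha$ keeps the contributions of distinct $\alpha$ from interfering modulo $c_0$, which is what makes $T$ a well-defined bounded operator.

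The heart of the argument, and the step I expect to be the main obstacle, is the \emph{nontriviality} of the resulting twisted sum, i.e.\ the non-existence of a bounded lifting $\widetilde T$. Such a lifting would be given by a uniformly bounded sequence of Radon measures $(\mu_n)_{n\in\omega}$ on $K$ with $(\mu_n(f))_n$ representing $T(f)$ modulo $c_0$ for every $f$. Evaluating against the functions $h_\alpha$ and exploiting that $(y_n)$ converges to $p$ while $p\notin A$, one forces these measures to realise simultaneously the $\omega_1$ independent constraints attached to the coordinates $i_\alpha$. A counting argument of Luzin type, available precisely because the almost disjoint family can be built suitably non-separated, shows that countably indexed, uniformly bounded data cannot meet uncountably many such constraints at once, a contradiction. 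It is this set-theoretic incompatibility, rather than any softer Banach-space obstruction, that blocks the lifting and hence yields a genuinely nontrivial twisted sum of $c_0$ and $C(K)$; as with Theorem~\ref{corson} and Theorem~\ref{pontoGdelta}, carrying the recursion out so that the non-separation survives is where CH is indispensable.
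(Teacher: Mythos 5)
The paper itself gives no proof of Theorem~\ref{convergentefoarSigma} beyond the citation of \cite{ExtCKc0} (whose stated technique is the construction of nontrivial twisted sums from suitable biorthogonal systems), so your argument has to stand on its own, and as written it has two genuine gaps, located exactly at the two points you flag as delicate. First, the operator $T$ is never actually defined. Prescribing, for each $\alpha\in\omega_1$, the behaviour of $(Tf)|_{M_\alpha}$ modulo finite sets does not determine an element of $\ell_\infty/c_0$: one must still exhibit a single bounded function on $\omega$ whose restriction to every $M_\alpha$ agrees mod finite with the prescribed data, such a simultaneous extension need not exist for an uncountable almost disjoint family, and when it exists it need not depend linearly on $f$. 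Indeed, the failure of coherent simultaneous extension across an almost disjoint family is precisely the Phillips-type phenomenon you intend to exploit for nontriviality, so well-definedness and non-liftability are in tension and cannot both be waved through. What is missing is the construction itself: from the data you extract one gets only the functions $h_\alpha=\pi_{i_\alpha}\circ\varphi$, i.e.\ half of a biorthogonal system, with no dual functionals and no explanation of how the sequence $(y_n)$ or the density of $A$ actually enters (note that $h_\alpha(y_n)\to h_\alpha(p)$ is mere continuity and uses neither hypothesis).

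Second, the nontriviality step is asserted rather than proved. A uniformly bounded sequence of measures can perfectly well satisfy uncountably many constraints of the kind you describe --- taking $\mu_n=\delta_p$ for all $n$ gives $\mu_n(h_\alpha)=h_\alpha(p)$ for every $\alpha$ --- so the claim that ``countably indexed, uniformly bounded data cannot meet uncountably many constraints'' is false in general and only becomes true for a specifically designed $T$, which is exactly what is absent. Relatedly, CH does not act where you place it: an uncountable set always contains $\omega_1$ distinct indices, no hypothesis needed. In arguments of this type CH is used to enumerate in order type $\omega_1=\mathfrak c$ the candidate trivializers (the countable or separable data that could witness a lifting) and to defeat each one by a transfinite recursion while keeping the construction coherent; no such enumeration or recursion appears in your proposal, so the one place where ``Assume CH'' must do real work is empty. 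The skeleton --- pullback of the Phillips sequence along a non-liftable $T\colon C(K)\to\ell_\infty/c_0$, with combinatorial input from the uncountable support of $\varphi(p)$ and the convergent sequence --- is a reasonable reading of the strategy of \cite{ExtCKc0}, but the two steps that would constitute the proof are missing.
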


Regarding twisted sums of $c_0$ and spaces of continuous functions, a particular family of Valdivia compact spaces was recently shown to be special; namely the spaces $2^{\kappa}$. In \cite{ExtCKc0}, Tausk and myself presented the following result.

\begin{teo}\label{ValdComNOt}
If $\kappa$ is a cardinal number with $\kappa \ge \mathfrak c$, then there exists a nontrivial twisted sum of $c_0$ and $C(2^\kappa)$.
\end{teo}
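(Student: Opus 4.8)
The plan is to reduce the statement to the single case $\kappa=\mathfrak c$ and then to build the nontrivial twisted sum directly on $C(2^{\mathfrak c})$, exploiting a biorthogonal system of full cardinality $\mathfrak c$ that $2^{\mathfrak c}$ carries in ZFC.

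First I would record a general \emph{complementation-lifting} principle: if $Y$ is complemented in $X$, say $X=Y\oplus W$, and there is a nontrivial twisted sum $0\to c_0\to Z\to Y\to 0$ with quotient map $q$, then $0\to c_0\to Z\oplus W\to Y\oplus W=X\to 0$, with quotient map $q\oplus\Id_W$, is again a twisted sum of $c_0$ and $X$, and it is nontrivial, since a projection of $Z\oplus W$ onto $c_0$ would restrict to a projection of $Z$ onto $c_0$. Next, for $\kappa\ge\mathfrak c$ the projection $\pi\colon 2^{\kappa}\to 2^{\mathfrak c}$ onto the first $\mathfrak c$ coordinates and the zero-padding inclusion $\iota\colon 2^{\mathfrak c}\to 2^{\kappa}$ satisfy $\pi\circ\iota=\Id$, so $r=\iota\circ\pi$ is a retraction of $2^{\kappa}$ onto a copy of $2^{\mathfrak c}$; hence $f\mapsto f\circ r$ is a norm-one projection of $C(2^{\kappa})$ onto a subspace isometric to $C(2^{\mathfrak c})$. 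By the lifting principle it therefore suffices to produce a nontrivial twisted sum of $c_0$ and $C(2^{\mathfrak c})$.

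For $K=2^{\mathfrak c}$ one cannot simply invoke the earlier results as black boxes: $K$ is not Corson (it is not a $\Sigma$-subset of itself) and it has no $G_\delta$ point (any $G_\delta$ set through $x$ contains the cylinder fixing only countably many coordinates), so Theorems~\ref{corson} and~\ref{pontoGdelta} do not apply, while the fact that $K$ is ccc blocks Proposition~\ref{resumoResultados}(2). Instead I would assemble the two ingredients that drive the constructions behind Theorems~\ref{corson}--\ref{convergentefoarSigma}. On one hand, $K$ is Valdivia with canonical dense $\Sigma$-subset $A=\{x:\supp x\text{ countable}\}$, and the all-ones point $\mathbf 1\in K\setminus A$ is the limit of the nontrivial sequence $x^{(n)}=\mathbf 1-e_{i_n}$ with distinct $i_n$, exactly the configuration of Theorem~\ref{convergentefoarSigma}. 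On the other hand—and this is what removes any appeal to CH—the Rademacher functions $r_i(x)=(-1)^{x_i}$, paired with the measures $r_i\,d\mu$ ($\mu$ the Haar measure on $2^{\mathfrak c}$), form a bounded biorthogonal system $(r_i,r_i\,d\mu)_{i<\mathfrak c}$ in $C(K)\times C(K)^{*}$ of cardinality exactly $\mathfrak c=w(K)$, since $\int r_i r_j\,d\mu=\delta_{ij}$. Thus the $\mathfrak c$-sized structure that Theorem~\ref{convergentefoarSigma} must manufacture from CH is here available outright.

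Finally I would feed this data into the quasi-linear-map machinery of \cite{ExtCKc0}: the convergent sequence to $\mathbf 1$ supplies the $c_0$-valued part, and the biorthogonal system indexes an uncountable family of constraints, yielding a quasi-linear map $\Omega\colon C(K)\to c_0$ whose associated extension $0\to c_0\to Z\to C(K)\to 0$ is the candidate twisted sum. \textbf{The main obstacle is nontriviality}, i.e.\ showing that $\Omega$ is not at bounded distance from a linear map, equivalently that $c_0$ is not complemented in $Z$. This is the heart of the matter: a splitting would produce a bounded lifting to $\ell_\infty$ of an operator into $\ell_\infty/c_0$, and one must show that the $\mathfrak c$ independent Rademacher constraints cannot be met simultaneously by any such lifting—a combinatorial and cardinality argument in which the orthogonality of the $r_i$ and the size $\mathfrak c$, rather than the hypothesis $\mathfrak c=\omega_1$, carry the proof. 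The remaining points—that $2^{\mathfrak c}$ is Valdivia, that the displayed sequence converges to $\mathbf 1$, and that the biorthogonal system is bounded—are routine.
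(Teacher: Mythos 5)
The statement is proved in the paper simply by citation to \cite[Corollary~2.10]{ExtCKc0}, which is itself derived there from a general theorem producing nontrivial twisted sums of $c_0$ and $C(K)$ from a bounded biorthogonal system of cardinality at least $\mathfrak c$ in $C(K)\times C(K)^*$ (with functionals of a specific, essentially finitely supported, form). Your preliminary steps are sound: the complementation-lifting principle is correct, the retraction $r=\iota\circ\pi$ does give a norm-one projection of $C(2^\kappa)$ onto an isometric copy of $C(2^{\mathfrak c})$ (although this reduction is superfluous, since the biorthogonal system you exhibit exists for every $\kappa\ge\mathfrak c$ directly), and your observation that $2^{\mathfrak c}$ is ccc, has no $G_\delta$ points, and is not Corson, so that none of the previously stated results apply as black boxes, is accurate.

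The genuine gap is that the entire mathematical content of the theorem is left unproved. You never define the quasi-linear map $\Omega$ (``feed this data into the machinery'' is not a construction), and you explicitly flag nontriviality as ``the main obstacle'' and ``the heart of the matter'' without carrying out the argument; a proof cannot end by announcing that the hard step remains. There are two ways to close this. Either you invoke, with its precise hypotheses, the general theorem of \cite{ExtCKc0} on biorthogonal systems of cardinality $\mathfrak c$ --- in which case you must check that your system satisfies those hypotheses; note that the functionals $r_i\,d\mu$ are not finitely supported, whereas the construction in \cite{ExtCKc0} works with functionals built from point evaluations, so you would do better to pair the coordinate functions with functionals of the form $\tfrac12(\delta_{\mathbf 0}-\delta_{e_i})$ --- or you reproduce the diagonalization/counting argument showing that no linear map is at finite distance from $\Omega$, which is precisely the part you have omitted. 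As written, the proposal identifies the right ingredients but does not constitute a proof.
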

\begin{proof}
See \cite[Corollary~2.10]{ExtCKc0}.
\end{proof}

Surprisingly, Marciszewski and Plebanek showed that, given a cardinal number $\kappa<\mathfrak{c}$, if MA($\kappa$) holds, then every twisted sum of $c_0$ and $C(2^\kappa)$ is trivial \cite[Corollary 5.2]{Plebanek}. In particular, under MA and the negation of CH, if $\kappa$ is a cardinal number satisfying $\kappa<\mathfrak{c}$, then every twisted sum of $c_0$ and $C(2^\kappa)$ is trivial. This answers consistently Question~\ref{bigQuestion}. Note that, under CH, Theorem~\ref{ValdComNOt} states that there exists a nontrivial twisted sum of $c_0$ and $C(2^\kappa)$, for every uncountable cardinal $\kappa$. Therefore the existence of nontrivial twisted sums of $c_0$ and $C(2^{\omega_1})$ is independent of the axioms of ZFC. The problem of determining if Question~\ref{bigQuestion} can be answered in ZFC remains open.

\begin{question}\label{aindaaberta}
Is there, in ZFC, a compact Hausdorff nonmetrizable space $K$ such that every twisted sum of $c_0$ and $C(K)$ is trivial?
\end{question}

To finish this section, we would like to tell the reader about the results of \cite{Castilloscattered}. In this work, Castillo showed that, assuming CH, if $K$ is a nonmetrizable compact Hausdorff space with finite Cantor--Bendixson height, then there exists a nontrivial twisted sum of $c_0$ and $C(K)$ \cite[Theorem 1]{Castilloscattered}.
It is worth commenting that, unlike the results in \cite{ExtCKc0}, where the nontrivial twisted sums were constructed, Castillo did not construct his nontrivial twisted sums; their existence is established by counting arguments (see \cite[Lemma 2]{Castilloscattered}). Interestingly, Plebanek and Marciszewski showed that, under MA($\kappa$), if $K$ is a separable scattered space of height 3 and weight $\kappa$, then every twisted sum of $c_0$ and $C(K)$ is trivial \cite[Theorem 9.7]{Plebanek}.

\end{section}

\begin{section}{Towards the answer to Question~\ref{aindaaberta}}
\label{sec:openProblems}

It follows from the discussion in Section~\ref{sec:Resultados} that Question~\ref{aindaaberta} can be rephrased as follows.

\begin{question}
Is there an additional consistent set-theoretic assumption that assures the existence of nontrivial twisted sums of $c_0$ and $C(K)$, for every nonmetrizable compact Hausdorff space $K$?
\end{question}

Since the first examples of nonmetrizable compact Hausdorff spaces such that $c_0$ can not be nontrivially twisted with their spaces of continuous functions were given assuming MA and the negation of CH, one might wonder if, under CH, there exists a nontrivial twisted sum of $c_0$ and $C(K)$, for every nonmetrizable compact Hausdorff space $K$. Having this consideration in mind and continuing the work of \cite{ExtCKc0}, D. Tausk and myself are currently investigating the following question.

\begin{question}
Is it true that, under CH, there exists a nontrivial twisted sum of $c_0$ and $C(K)$, for every nonmetrizable Valdivia compactum?
\end{question}

The techniques developed in \cite{ExtCKc0} provided nontrivial twisted sums of $c_0$ and $C(K)$, for a huge class of Valdivia compact spaces $K$. Note that Theorems \ref{pontoGdelta} and \ref{convergentefoarSigma} do not solve the problem, under CH, if $K$ is a nonempty Valdivia compactum satisfying all the following properties:
\begin{enumerate}
\item $K$ satisfies ccc;
\item $K$ does not admit a $G_\delta$ point;
\item $K$ does not admit a nontrivial convergent sequence in the complement of a dense $\Sigma$-subset.
\end{enumerate}
Note that the case when $K$ does not satisfy ccc is handled by Proposition~\ref{resumoResultados}(2). Finding examples of nonempty Valdivia compact spaces with no $G_\delta$ points and no nontrivial convergent sequences in the complement of a dense $\Sigma$-subset is not a trivial task, since the absence of $G_\delta$ points tends to make the complement of dense $\Sigma$-subsets ``large'' (see, for instance, \cite[Theorem~3.3]{Kalenda} for a more precise statement). In \cite[Proposition~4.7]{ExtCKc0}, it was shown that the path space of a certain tree $T$, endowed with the product topology of $2^T$, provides such an example. However, using this topology it is not possible to have a nonempty path space with no $G_\delta$ points and ccc. In \cite{ValdiviaTree}, D. Tausk and myself constructed an example of a nonempty Valdivia compact space satisfying Properties (1), (2) and (3) described above. This construction is done under $\diamondsuit$. This space, given in \cite[Theorem~4.1]{ValdiviaTree}, is the path space of a tree, endowed with an intricate compact Hausdorff topology. In what follows, we describe briefly the tools used in \cite{ValdiviaTree}.

Recall that a {\em tree\/} is a partially ordered set $(T,{\le})$ such that, for all $t\in T$, the set $(\cdot,t)=\big\{s\in T:s<t\big\}$ is well-ordered. A subset $X$ of $T$ is called an {\em initial part\/} of $T$ if $(\cdot,t)\subset X$, for all $t\in X$; a {\em chain\/} if it is totally ordered; an {\em antichain\/} if any two distinct elements of $X$ are incomparable; a {\em path\/} if it is both a chain and an initial part of $T$; the {\em path space} of a tree is the set of its paths. We say that $T$ satisfies the {\em countable chain condition\/} (ccc) if every antichain in $T$ is countable.
At this point, the reader must be wondering: what is the relationship between trees and Valdivia compact spaces?
It is a good question that is answered by the following facts:
\begin{itemize}
\item Kubi\'s and Michalewski established in \cite{KubisSmall} a correspondence between Valdivia compact spaces with weight at most $\omega_1$ and certain inverse limits of compact metric spaces \cite[Theorem~2.9]{ValdiviaTree};
\item we established a correspondence between those inverse limits and certain inverse limits of path spaces of trees \cite[Proposition~3.3]{ValdiviaTree}.
\end{itemize}

Therefore, combining those two correspondences, we obtain a characterization of Valdivia compact spaces with weight at most $\omega_1$ in terms of trees with some additional structures and suitable topologies on their path spaces \cite[Theorem~3.4]{ValdiviaTree}.
This characterization allows one to fine-tune the structure of a Valdivia compactum by manipulating the properties of the corresponding tree. We observe that axiom $\diamondsuit$ is used in the construction of our tree in a similar way that it is used to construct a Souslin tree. Recall that a {\em Souslin tree} is a tree with height $\omega_1$, satisfying ccc and admitting only countable paths (see \cite[Chapter II, §4]{Kunen} to understand the relationship between Souslin trees and the Souslin hypothesis).

\end{section}

\end{document}